\documentclass[12pt]{amsart}
\usepackage{amsfonts,amstext}
\usepackage{amsmath,amsthm,amssymb}
\usepackage{verbatim} 

\newtheorem{lemma}{Lemma}
\newtheorem{theorem}[lemma]{Theorem}

\theoremstyle{definition}

\newtheorem{remark}[lemma]{Remark}

\input xy
\xyoption{all} 

\def\cvec#1#2{{\tiny \begin{pmatrix} #1 \\ #2 \end{pmatrix}}}

\def\d{\delta}

\def\s{\sigma}
\def\t{\tau}
\def\x{\chi}
\def\D{\Delta}

\def\cA{\mathcal{A}}
\def\A{\mathbb{A}}
\def\sK{\mathcal{K}}
\def\cK{\sK}

\def\sP{\mathcal{P}}
\def\cE{\mathcal{E}}
\def\cF{\mathcal{F}}
\def\cP{\sP}

\def\fm{\mathfrak{m}}

\def\Spec{\operatorname{Spec}}
\def\Hom{\operatorname{Hom}}
\def\map#1{{\buildrel #1 \over \lra}}
\def\ker{\operatorname{ker}}
\def\coker{\operatorname{coker}}

\def\ov#1{\overline{#1}}

\newcommand{\real}{\mathbb{R}}

\newcommand{\ints}{\mathbb{Z}}
\def\Z{\ints}

\newcommand{\lra}{\longrightarrow}
\newcommand{\Lra}{\Longrightarrow}


\newcommand{\isom}{\cong}

\newcommand{\inj}{\hookrightarrow}
\def\into{\inj}
\newcommand{\surj}{\twoheadrightarrow}
\def\onto{\surj}
\newcommand{\dual}{\vee}
\newcommand{\tensor}{\otimes}
\newcommand{\intersect}{\cap}


 \newcommand{\ra}{\rightarrow}

 \def\l{\left}
 
 \def\r{\right}
 
 \def\sV{\mathcal{V}}
 \def\cV{\mathcal{V}}

 \def\sU{\mathcal{U}}


\begin{document}


\title{The Equivariant {$K$}-theory of Toric Varieties}

\author{Suanne Au}
\address{Dept.\ of Mathematics, University of Nebraska - Lincoln,
  Lincoln, NE 68588, USA}
\email{xau@math.unl.edu}

\author{Mu-wan Huang}
\email{s-mhuang2@math.unl.edu}

\author{Mark E. Walker}
\thanks{Walker's research was partially supported by NSF grant DMS-0601666.}
\email{mwalker5@math.unl.edu}

\date{\today}

\begin{abstract}
This paper contains two results concerning the equivariant $K$-theory
of toric varieties. The first is a formula for the equivariant
$K$-groups of an arbitrary affine toric variety, generalizing the known
formula for smooth ones. In fact, this result is established in a more
general context, involving the $K$-theory of graded projective
modules. The second result is a new proof of a theorem due to
Vezzosi and Vistoli concerning the equivariant $K$-theory of smooth
(not necessarily affine) toric varieties.
\end{abstract}

\subjclass[2000]{19D50, 14M25}

\keywords{equivariant K-theory, toric varieties}

\maketitle

\tableofcontents

\section{Introduction}

Let $k$ be a field,
suppose $U_\sigma$ is the affine toric $k$-variety  associated to a
strongly convex rational polyhedral cone $\sigma$ in Euclidean
$n$-space,
and let $T$ be the $n$-dimensional torus that acts on $U_\sigma$.
If $U_\sigma$  is
smooth, then there is an equivariant isomorphism
$U_\sigma \cong T_\sigma \times \A^r$, where $r = \dim(\sigma)$ and
$T_\sigma$ is the unique orbit of minimal dimension (namely, dimension
$n-r$).
Using basic properties of equivariant $K$-theory of smooth
varieties (see, for example, \cite{Merk}), ones obtains natural isomorphisms
\begin{equation} \label{E1}
K_q^T(U_\sigma) \cong K_q^T(T_\sigma) \cong K_q(k) \otimes_\Z
\Z[M_\sigma]
\end{equation}
where $M_\sigma \cong \Z^{n-r}$ is the group of characters of $T_\sigma$.

This paper consists of two main results related to the isomorphism
\eqref{E1}. The first, Theorem \ref{Thm2}, shows that
this isomorphism holds for all affine toric varieties, not
just smooth ones.  In fact, this theorem establishes the
more general isomorphism
\begin{equation} \label{E7}
K_q^T(U_\sigma \times_k \Spec R) \cong K_q(R) \otimes_\Z
\Z[M_\sigma],
\end{equation}
where $R$ is any  $k$-algebra and the action of $T$ on
$\Spec R$ is trivial.
Theorem \ref{Thm2} is actually a consequence of our
Theorem \ref{MainThm1}, concerning the $K$-theory of graded projective modules.

The second main result of this paper is a new proof
of a theorem due to Vezzosi and Vistoli \cite[Theorem 6.2]{VV} that
calculates the equivariant $K$-theory of an arbitrary smooth
toric variety.  See our Theorem \ref{ExactSeq} for the
precise statement. The proof due to Vezzosi and Vistoli uses a more
general result, one that applies to arbitrary actions
by diagonalizable groups schemes. However, in the important special
case of toric varieties, we recover their result using only Equation
\eqref{E1}, the theory of sheaf cohomology for fans, and Thomason's
foundational work on equivariant $K$-theory \cite{Thomason}.

\section{The $K$-theory of graded projective modules}

The first main goal of this paper is to establish the isomorphism
\eqref{E7}.
The action of $T$ on $U_\sigma$ is given by a grading (by the group of
characters of $T$) of
the associated ring of regular functions for $U_\sigma$,
and an equivariant bundle on
$U_\sigma$ is given by a graded projective module over this ring. Thus, our first
theorem is really about the $K$-theory of graded projective
modules. In this section, we state and prove a general theorem of this
form.

Let $R$ be any commutative ring, $M$ an abelian group (written
additively), and $A \subset M$ a sub-monoid. We form the associated
monoid-ring $R[A]$. As a matter of notation, an element $a \in A$ is
written as $\x^a$ in $R[A]$ so that $\x^a \x^b = \x^{a+b}$ for $a,b
\in A$.  The commutative ring $R[A]$ is an $M$-graded $R$-algebra,
with elements of $R$ declared to be of degree zero and for any $a \in
A$, $\deg(\chi^a) := a \in A \subset M$.  Let $\cP(R)$ denote the
category of finitely generated projective $R$-modules and let
$\cP^M(R[A])$ denote the category consisting of 
finitely generated $M$-graded projective $R[A]$-modules and 
with morphisms
given by $M$-graded $R[A]$-module homomorphisms.
Let $K_*^{M}(R[A])$ denote
the $K$-theory of the exact category $\cP^M(R[A])$.

Recall that if $G$ is an $M$-graded $R[A]$-module and $m \in M$, then
$G[m]$ denotes the same module but with the grading shifted so that
$G[m]_{w} = G_{w-m}$ for all $w \in M$. In particular, $R[A][m]$ is graded-free of rank
one generated by an element of degree $m$.

Write $U(A)$ for the subgroup of units (i.e., elements with additive
inverses) in the monoid $A$. We fix, once and for all, a set 
$S(A) \subset M$ of coset representatives for the subgroup $U(A)$ of
$M$.

\begin{theorem} \label{MainThm1}
For a commutative ring $R$, an  abelian
  group  $M$, and a sub-monoid $A$ of $M$,
we have an isomorphism
$$
K_q(R) \otimes_\Z \Z[M/U(A)] \cong
K_q^{M}(R[A]), \, \, \text{for all $q$.}
$$
Under the identification of $K_q(R) \otimes_\Z \Z[M/U(A)]$ with
$\bigoplus_{S(A)} K_q(R)$, this isomorphism is 
induced by the collection of exact functors sending $(P, s)$,
with $P \in \cP(R)$ and $s \in S(A)$, to $P \otimes_R R[A][s]$.
\end{theorem}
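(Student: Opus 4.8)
The plan is to prove that the exact functors $\Phi_s \colon \cP(R) \to \cP^M(R[A])$, $\Phi_s(P) = P \otimes_R R[A][s]$, assemble into an equivalence on $K$-theory. Writing $\Psi := \bigoplus_{s \in S(A)} \Phi_s \colon \prod_{s \in S(A)} \cP(R) \to \cP^M(R[A])$, the claim is exactly that $\Psi_*$ realizes the asserted isomorphism once $K_q(R) \otimes_\Z \Z[M/U(A)]$ is identified with $\bigoplus_{S(A)} K_q(R)$. Set $H := U(A)$. The first observation is that $R[A]$ carries the graded ideal $\fm$ generated by $\{\chi^a : a \in A \setminus H\}$, with $R[A]/\fm = R[H]$, and that reduction modulo $\fm$ gives an exact functor $F \colon \cP^M(R[A]) \to \cP^M(R[H])$, $G \mapsto G \otimes_{R[A]} R[H]$; exactness holds because short exact sequences of projectives split and hence remain exact after base change. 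I would first dispose of the group case: since $R[H]$ is supported on the subgroup $H \subset M$, an $M$-graded $R[H]$-module splits along the cosets $M/H$, and on each coset the invertible elements $\chi^h$ identify every graded piece with the degree-$s$ part. This yields an equivalence $\cP^M(R[H]) \simeq \prod_{s \in S(A)} \cP(R)$, hence $K_q^M(R[H]) \cong \bigoplus_{S(A)} K_q(R)$, and a direct computation of $F(\Phi_s(P))$ shows that $F \circ \Psi$ is the identity of $\prod_{S(A)} \cP(R)$. In particular $\Psi_*$ is split injective, so it remains to prove surjectivity.

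The heart of the matter is a structure theorem: every finitely generated $M$-graded projective $R[A]$-module $G$ is isomorphic to $\Psi(F(G)) = \bigoplus_s F(G)_s \otimes_R R[A][s]$. To prove it I would first reduce to the case that $A$ and $M$ are finitely generated, using $R[A] = \varinjlim R[A_i]$ over the finitely generated submonoids $A_i \subset A$, the fact that every module and morphism is defined over some $A_i$, and the continuity of $K$-theory under filtered colimits. The key point is that the image $\ov{A}$ of $A$ in $\ov{M} := M/H$ is a sharp submonoid: it has no nonzero units, and it meets the torsion subgroup of $\ov{M}$ only in $0$, so in the finitely generated case there is a homomorphism $\ell \colon A \to \Z_{\geq 0}$ vanishing exactly on $H$. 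Thus $\fm$ strictly raises $\ell$-degree while $H$ preserves it. With this positivity in hand I would lift a graded basis of $F(G)$ to a graded homomorphism $\Psi(F(G)) \to G$ and invoke graded Nakayama for $\fm$ — degrees are bounded below in $\ell$ on any finitely generated $G$ — to conclude that the lift is surjective, and then, splitting off the projective kernel and reducing modulo $\fm$ once more, that it is an isomorphism.

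The structure theorem immediately gives essential surjectivity of $\Psi$, hence surjectivity on $K_0$; but for the higher groups essential surjectivity together with a one-sided inverse does not suffice, since the isomorphism $G \cong \Psi(F(G))$ is not natural in $G$, and $\cP^M(R[A])$ is strictly larger than $\prod_S \cP(R)$ on account of the off-diagonal homogeneous maps $\Phi_s(P) \to \Phi_{s'}(P')$, which are nonzero exactly when $s - s' \in A$. I expect this to be the main obstacle. What controls it is that $s - s' \in \ov{A}$ defines a partial order on $S(A)$ — antisymmetry is precisely the sharpness of $\ov{A}$ — so $\cP^M(R[A])$ is the category of finitely generated projectives over a directed ringoid whose diagonal endomorphism rings are all equal to $R[A]_0 = R$. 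To exploit this I would filter each $G$ functorially by the graded submodules $G^{\geq n} := \bigoplus_{\ell(w) \geq n} G_w$, which are submodules exactly because elements of $A$ have nonnegative $\ell$-degree; the subquotients are annihilated by $\fm$, hence are $M$-graded $R[H]$-modules, and the structure theorem identifies them with graded-free $R[H]$-modules whose $K$-theory is governed by the group case. Since these subquotients are not projective over $R[A]$ itself, assembling the filtration into the desired isomorphism requires a dévissage- or resolution-type argument together with Quillen's additivity theorem; this directed bookkeeping, rather than any single sharp estimate, is where the real effort lies.
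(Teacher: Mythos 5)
Your first two steps line up with the paper's Lemmas \ref{EqSplitA} and \ref{EqSplitB}: the coset decomposition giving $\cP^M(R[U(A)])\simeq\prod_{S(A)}\cP(R)$, and the ``structure theorem'' that every object of $\cP^M(R[A])$ is extended from $\cP^M(R[U(A)])$, proved by lifting along the reduction $F$ and applying graded Nakayama. Your route to Nakayama via a positive grading $\ell$ on $\ov A\subset M/U(A)$ (after reducing to finitely generated $A$) is workable but unnecessary: the ideal $\fm$ generated by $\{\chi^a: a\notin U(A)\}$ lies in \emph{every} maximal $M$-graded ideal of $R[A]$ for elementary reasons, so the graded Nakayama lemma applies with no finiteness hypotheses and no auxiliary $\ell$. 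You also correctly isolate the real obstruction for $q>0$: the off-diagonal homogeneous maps $\Phi_s(P)\to\Phi_{s'}(P')$, which are governed by a partial order on $S(A)$ whose antisymmetry is the sharpness of $\ov A$.

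The gap is in the last step, which you explicitly leave as ``a d\'evissage- or resolution-type argument together with Quillen's additivity theorem.'' The specific filtration you propose, $G^{\geq n}=\bigoplus_{\ell(w)\geq n}G_w$, will not assemble into such an argument: it is an infinite descending filtration (already for $G=R[A]$), its subquotients are $R[U(A)]$-modules that are \emph{not} objects of $\cP^M(R[A])$, and d\'evissage is a theorem about abelian categories, not about exact categories of projectives; so neither additivity nor d\'evissage applies to this filtration as stated. The filtration that actually works is by coset support, not by $\ell$-degree: writing $\cP^M_F(R[A])$ for the objects supported on a finite subset $F\subset S(A)$, one has $\cP^M(R[A])=\varinjlim_F\cP^M_F(R[A])$ and inducts on $\#F$. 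Choosing $s\in F$ extremal for the partial order, the triangularity of the Hom-groups forces every admissible exact sequence $0\to B\to P\to C\to 0$ with $B$ supported at $\{s\}$ and $C$ at $F\setminus\{s\}$ to split canonically, so the category of such sequences is equivalent to $\cP^M_F(R[A])$ itself, and Waldhausen's additivity theorem gives $K(\cP^M_F(R[A]))\simeq K(\cP^M_{\{s\}}(R[A]))\times K(\cP^M_{F\setminus\{s\}}(R[A]))$; together with your structure theorem (which handles the base case $\#F=1$) this closes the induction. You had the right difficulty and the right tool in hand, but the finite, support-indexed filtration is what makes the tool applicable, and without it the proof is not complete.
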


The proof of the Theorem requires the following two lemmas. 
Throughout the rest of this section, let $U = U(A)$
and $S = S(A)$.

\begin{lemma}\label{EqSplitA} 
The exact functor
$$
\psi: \bigoplus_S \cP(R) \to \cP^M(R[U])
$$
determined by 
$$
(P_s)_{s \in S} \mapsto \bigoplus_{s \in S} P_s
\otimes_R R[U][s]
$$
is an equivalence of categories. 
\end{lemma}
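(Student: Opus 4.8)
The plan is to construct an explicit quasi-inverse to $\psi$ and verify the two natural isomorphisms directly. Define a functor
$$
\phi : \cP^M(R[U]) \to \bigoplus_S \cP(R)
$$
sending a graded module $G$ to the tuple $(G_s)_{s \in S}$ of its homogeneous components, where each $G_s$ is regarded as an $R$-module via the degree-zero subring $R \subset R[U]$. A graded morphism restricts to $R$-linear maps on each component, so $\phi$ is additive and exact; the one thing to check before proceeding is that it actually lands in $\bigoplus_S \cP(R)$, i.e.\ that each $G_s$ is finitely generated projective over $R$. For this I would use that $G$ is a graded direct summand of a finite graded-free module $F = \bigoplus_i R[U][m_i]$. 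Since taking the degree-$s$ component is exact, it suffices to note that $F_s = \bigoplus_i (R[U])_{s - m_i}$, and that $(R[U])_w = R\,\chi^w$ for $w \in U$ and is $0$ otherwise; hence $F_s$ is finite free over $R$ and $G_s$, a summand of it, lies in $\cP(R)$.

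Next I would check $\phi \circ \psi \cong \mathrm{id}$. For a tuple $(P_s)_{s\in S}$ and a representative $t \in S$, the degree-$t$ component of $\bigoplus_s P_s \otimes_R R[U][s]$ is $\bigoplus_s P_s \otimes_R (R[U])_{t - s}$; because distinct elements of $S$ lie in distinct cosets of $U$, we have $t - s \in U$ only when $s = t$, so this collapses to $P_t \otimes_R R \cong P_t$, naturally in $(P_s)$. The reverse composite $\psi \circ \phi \cong \mathrm{id}$ is where the hypothesis $U = U(A)$ does the real work. There is a natural graded $R[U]$-linear map
$$
\bigoplus_{s \in S} G_s \otimes_R R[U][s] \to G, \qquad g \otimes \chi^{u} \mapsto \chi^{u} g \in G_{s+u}
$$
for $g \in G_s$ and $u \in U$. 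On the degree-$w$ component, writing $w = s + u$ with $s \in S$ and $u \in U$ uniquely determined, this map is multiplication $\chi^u : G_s \to G_w$, which is an isomorphism precisely because $\chi^u$ is a unit of $R[U]$ with inverse $\chi^{-u}$. Being a graded isomorphism in every degree, it is an isomorphism of graded modules, evidently natural in $G$.

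I expect the only genuine subtlety to be the grading bookkeeping: confirming that the multiplication map above is well defined and graded, and that the decomposition $M = \bigsqcup_{s \in S}(s + U)$ splits each homogeneous component into exactly one nonzero summand. The conceptual heart of the argument, and the single place the hypothesis that $U$ is the full group of units is indispensable, is that multiplication by $\chi^u$ identifies $G_s$ with $G_{s+u}$ for every $u \in U$; this is what makes $G$ recoverable from its components along a set of coset representatives, and hence what turns $\phi$ into an honest inverse rather than merely a conservative functor.
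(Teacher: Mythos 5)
Your argument is correct and rests on the same two computations as the paper's proof: that the coset decomposition $M=\bigsqcup_{s\in S}(s+U)$ isolates exactly one nonzero summand in each degree, and that multiplication by $\chi^u$ for $u\in U$ is an isomorphism $G_s\map{\cong}G_{s+u}$. The packaging differs: you build an explicit quasi-inverse $\phi$ and verify both composites, whereas the paper shows $\psi$ is fully faithful (via the Hom computation \eqref{E814}) and then essentially surjective (via the decomposition $F=\bigoplus_s Q_s$ with $Q_s=\bigoplus_{m\in s+U}F_m$). These are interchangeable; your route has the mild advantage of producing the inverse functor explicitly, while the paper's Hom computation \eqref{E814} is reused later in the proof of Theorem \ref{MainThm1}, so it is not wasted effort there.

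The one step you should tighten is the claim that $G$ is a \emph{graded} direct summand of a finite graded-free module. The paper's convention (see the second assertion of Lemma \ref{EqSplitB}, which is stated as something requiring proof) is that objects of $\cP^M(R[U])$ are finitely generated, $M$-graded, and projective as \emph{ungraded} modules; a graded-free presentation $\bigoplus_i R[U][m_i]\onto G$ then splits a priori only ungradedly. You can repair this either by taking the degree-preserving homogeneous component of an ungraded splitting (legitimate here because $M$ is a group and $G$ is finitely generated, so an ungraded $R[U]$-linear map decomposes into finitely many homogeneous pieces), or, as the paper does, by avoiding graded presentations altogether: $Q_s$ is an ungraded direct summand of $G$, hence finitely generated projective over $R[U]$, and $G_s\cong Q_s\otimes_{R[U]}R$ along the augmentation, hence lies in $\cP(R)$. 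Either fix is short, but as written the graded splitting is asserted rather than proved.
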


\begin{proof}
For $P, P' \in \cP(R)$ and $s,s' \in S$, we have an isomorphism
{\small
\begin{equation} \label{E814}
\Hom_{R[U]}^M(P \otimes_R R[U][s], P' \otimes_R R[U][s'])
\cong
\begin{cases}
\Hom_R(P,P') & \text{if $s = s'$ and} \\
0 & \text{otherwise,} \\
\end{cases}
\end{equation}
}determined by sending a graded homomorphism from
$P \otimes_R R[U][s]$ to $P' \otimes_R R[U][s']$ to
the induced map on the degree $s$ pieces.
It follows that $\psi$ is fully faithful.

Given $F \in \cP^M(R[U])$,
the $M$-grading on $F$ gives a decomposition
$F = \bigoplus_m F_m$. If $m, m' \in M$ belong to different cosets of
  $U$, then $(R[U] \cdot F_m) \cap F_{m'} = 0$. Thus we have an
  internal direct sum decomposition
$$
F = \bigoplus_{s \in S} Q_s
$$
as $M$-graded $R[U]$-modules, where $Q_s = \bigoplus_{m \in
s + U} F_m$. 
Since $F$ is finitely
generated, $Q_s = 0$ for all but a finite number
of $s$. For each $s \in S$, we have 
$F_s \cong
Q_s \otimes_{R[U]} R$ (where $R[U] \to R$
is the augmentation map), and hence
$F_s$ is a finitely
generated and projective $R$-module.
If $m_1, m_2$ belong to the same coset
of $U$ in $M$, then $\chi^{m_2-m_1}: F_{m_1} \map{\cong} F_{m_2}$ is
an isomorphism of $R$-modules. Using this,
we see that the map
$$
F_s \otimes_R R[U][s] \to Q_s
$$
determined by $p \otimes \chi^u \mapsto \chi^u \cdot p$ is a graded
isomorphism of $R[U]$-modules. It follows that $F$ is isomorphic to
$\psi((F_s)_{s \in S})$, and hence $\psi$ is an equivalence.
\end{proof}

If $C, C'$ are $M$-graded rings, $\phi: C \to C'$ an $M$-graded ring
homomorphism and $F$ is an $M$-graded $C$-module, then the module
obtained from $F$ via extension of scalars along $\phi$, namely $C'
\otimes_C F$, acquires the structure of an $M$-graded $C'$-module
having the property that if $c' \in C'_{m_1}$ and $f \in F_{m_2}$ then
$c' \otimes f \in (C' \otimes_C F)_{m_1 + m_2}$ (see \cite[\S 2.4]
{MGR}). In particular, the module obtained from $C[m]$ by
extension of scalars along $\phi$ is $C'[m]$.

\begin{lemma}\label{EqSplitB}
The exact functor 
$$
\cP^M(R[U]) \to \cP^M(R[A])
$$
defined by extension of scalars 
induces a bijection on isomorphism
classes of objects. In particular, objects of $\cP^M(R[A])$ are
projective in the category of all $M$-graded $R[A]$-modules.
\end{lemma}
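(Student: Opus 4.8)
The plan is to realize the functor of the lemma as extension of scalars along a \emph{split} inclusion and to invert it by reduction modulo the irrelevant ideal. First I would check that $A \setminus U$ is an ideal of the monoid $A$: if $a \in A \setminus U$, $b \in A$ and $a+b \in U$, then $-a = b - (a+b)$ would lie in $A$, forcing $a \in U$, a contradiction; hence $(A\setminus U) + A \subseteq A \setminus U$. Consequently $\fm := \bigoplus_{a \in A \setminus U} R\chi^a$ is a graded ideal of $R[A]$ with $R[A]/\fm \cong R[U]$, and the quotient map $\pi \colon R[A] \onto R[U]$ is a graded ring retraction of the inclusion $\iota \colon R[U] \inj R[A]$. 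Writing $\Phi = R[A] \otimes_{R[U]} (-)$ for the functor of the lemma and $\Psi = R[U] \otimes_{R[A]} (-) = (-)/\fm(-)$ for reduction along $\pi$, the relation $\pi\iota = \mathrm{id}$ yields a natural isomorphism $\Psi\Phi \cong \mathrm{id}$. Applying $\Psi$ to any isomorphism $\Phi(F) \cong \Phi(F')$ then gives $F \cong F'$, so $\Phi$ is already injective on isomorphism classes.

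For surjectivity, given $G \in \cP^M(R[A])$ I would set $\bar G := \Psi(G) = G/\fm G$. Being a base change of the projective module $G$, it is finitely generated and projective over $R[U] = R[A]/\fm$, so by Lemma \ref{EqSplitA} it lies in $\cP^M(R[U])$ and is in particular projective in the category of $M$-graded $R[U]$-modules. The graded $R[U]$-surjection $G \onto \bar G$ therefore splits; choosing a graded section $\lambda$ and invoking the adjunction (extension along $\iota$ is left adjoint to restriction) produces a graded $R[A]$-homomorphism $\tilde\lambda \colon \Phi(\bar G) \to G$, $r \otimes \bar g \mapsto r\lambda(\bar g)$, whose reduction modulo $\fm$ is the identity of $\bar G$.

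It remains to show $\tilde\lambda$ is an isomorphism, and the one real tool I need is a graded Nakayama lemma: a finitely generated $M$-graded $R[A]$-module $N$ with $N = \fm N$ vanishes. Applying the right-exact functor $\Psi$ to $\coker\tilde\lambda$ shows $\coker\tilde\lambda = \fm \cdot \coker\tilde\lambda$; applying $\Psi$ to $0 \to \ker\tilde\lambda \to \Phi(\bar G) \to G \to 0$ and using that $G$ is flat (so $\mathrm{Tor}_1^{R[A]}(G, R[U]) = 0$) shows likewise $\ker\tilde\lambda = \fm \cdot \ker\tilde\lambda$. Both are finitely generated, so Nakayama forces them to vanish and $\tilde\lambda$ is an isomorphism. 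Thus $G \cong \Phi(\bar G)$ with $\bar G \in \cP^M(R[U])$, giving surjectivity on isomorphism classes; and since $\Phi$ carries graded-free modules to graded-free modules and objects of $\cP^M(R[U])$ are graded summands of such (Lemma \ref{EqSplitA}), the module $G \cong \Phi(\bar G)$ is projective among all $M$-graded $R[A]$-modules, which is the final assertion.

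The step I expect to be the main obstacle is the Nakayama lemma, because $M$ and $A$ are arbitrary: there is no $\nat$-grading and no Noetherian or positivity hypothesis, so the partial order on $M/U$ need not be well-founded and one cannot simply induct downward on degrees. The point to exploit is that I only need a \emph{minimal element of the support} of the finitely generated module $N$, not well-foundedness of the whole poset. The support is contained in a finite union $\bigcup_i (\bar d_i + \bar A)$ of translates of the image $\bar A$ of $A$ in $M/U$, and among the finitely many generator degrees $\bar d_i$ one is minimal in the support. Since multiplication by $\chi^a$ with $a \in A \setminus U$ strictly raises degree in $M/U$, reading the equation $N = \fm N$ in such a minimal degree forces that graded piece to be zero, a contradiction unless $N = 0$.
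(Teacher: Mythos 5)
Your proposal is correct and follows essentially the same route as the paper's proof: the same split graded surjection $R[A] \onto R[U]$ gives injectivity on isomorphism classes, and surjectivity is obtained by comparing $G$ with the extension of scalars of $G/\fm G$ via a graded map that reduces to the identity modulo the irrelevant ideal and then applying graded Nakayama to its cokernel and kernel (your splitting-plus-adjunction construction of $\tilde\lambda$ is just the adjoint form of the paper's lifting of $\tilde F \onto F$ along $E \onto F$ using projectivity of $\tilde F$ among graded modules). The only substantive difference is that you prove the graded Nakayama lemma directly by a minimal-degree argument in $M/U(A)$, whereas the paper observes that the irrelevant ideal lies in every maximal $M$-graded ideal and cites Perling; your argument is sound and has the merit of making the proof self-contained.
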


\begin{proof}
For a projective $R$-module $P$ and an arbitrary $M$-graded
$R[A]$-module $G$, 
we have 
\begin{equation} \label{E214}
\Hom_{\cP^M(R[A])}(P \otimes_R R[A][m], G)
\cong
\Hom_R(P, G_{m}).
\end{equation}
Since $G \mapsto G_m$ is an exact functor, $P \otimes_R R[A]$ is a
projective object in the category of all $M$-graded $R[A]$-modules.
In particular, 
the second assertion of the Lemma follows from the first one, using Lemma
\ref{EqSplitA}.

The $M$-graded $R$-algebra
map $R[U] \to R[A]$ is split by the $M$-graded $R$-algebra map $R[A]
\to R[U]$ defined by
$$
\chi^a \mapsto
\begin{cases}
\chi^a & \text{if $a \in U$ and} \\
0 & \text{if $a \notin U$.} \\
\end{cases}
$$
Since the composition $R[U] \into R[A] \onto R[U]$ is the identity,
the functor $\cP^M(R[U]) \to \cP^M(R[A])$ is split injective on
isomorphism classes of objects.

The proof of the surjectivity on isomorphism classes will use the
graded version of Nakayama's Lemma.
Let $I \subset R[A]$ denote the kernel of the split surjection $R[A]
\onto R[U]$  --- it is
generated as an $R$-module by $\{\chi^a \mid a \notin U\}$.
Clearly $I$ is $M$-graded and, moreover, every maximal $M$-graded ideal
of $R[A]$ contains $I$. Indeed, if $\fm$ is  a maximal $M$-graded ideal,
then $R[A]/\fm$ is a $M$-graded ring such that every non-zero homogeneous
element is a unit (and whose inverse is, necessarily, homogeneous).
For $a \notin U$, if $\ov{\chi^a} \ne 0$ in $R[A]/\fm$, then we would
have  $\ov{\chi^a} \cdot \ov{r \chi^b} = 1$
  for some $r \in R$ and $b \in A$. But then $a+b = 0$, contrary to $a
  \notin U$. Thus $\chi^a \in \fm$ for all $a \notin U$.
Since $I$ is contained in every maximal $M$-graded ideal,
the graded version of Nakayama's Lemma 
(see, for example, \cite[Theorem 3.6]{Perling-2004} for a proof) gives
us: If $G$ is a finitely
generated $M$-graded $R[A]$-module such that $IG = G$, then $G = 0$.

Given $E \in \cP^M(R[A])$, let $F = E \otimes_{R[A]} R[U] \in \cP^M(R[U])$ (with
the map $R[A] \to R[U]$ being the above split surjection) and let $\tilde{F}
= F \otimes_{R[U]} R[A]$.
We prove $E \cong \tilde{F}$ in
$\cP^M(R[A])$. As noted above, 
\eqref{E214} and Lemma \ref{EqSplitA} show that $\tilde{F}$ is a projective object
in the category of all 
$M$-graded $R[A]$-modules. Thus the canonical map $\tilde{F} \onto F$
lifts along the surjection $E \onto F$ to give a morphism $\theta:
\tilde{F} \to E$ in $\cP^M(R[A])$.
The map $\theta$ induces an isomorphism upon modding
out by $I$ and hence, by Nakayama's Lemma, $\coker(\theta) = 0$.
Since $E$ is projective as an ungraded $R$-module, the exact sequence
$$
0 \to \ker(\theta) \to \tilde{F} \to E \to 0
$$
remains exact upon application of $- \otimes_{R[A]} R[U]$, and hence,
using Nakayama's Lemma again, $\ker(\theta) = 0$.
\end{proof}

\begin{proof}[Proof of Theorem \ref{MainThm1}]
By Lemma \ref{EqSplitA}, we have 
$$
K_q^M(R[U]) \cong \bigoplus_S K_q(R) \cong K_q(R) \otimes_\Z \Z[M/U].
$$
In order to prove the theorem, it therefore suffices to 
prove the exact functor
\begin{equation} \label{E42c}
\cP^M(R[U]) \to \cP^M(R[A]),
\end{equation}
induced by extension of scalars, 
induces a homotopy equivalence on $K$-theory
spaces.

For any finite subset $F \subset S$, let $\cP_F^M(R[A])$ denote the
full subcategory of those objects in $\cP^M(R[A])$ isomorphic to one of the form
$$
\bigoplus_{i=1}^l P_i \otimes_R R[A][s_i]
$$
such that $s_i \in F$ for $i =1, \dots, l$. Define
$\cP_F^M(R[U])$ similarly. Note that $\cP_F^M(R[U])$ and
$\cP_F^M(R[A])$ are closed under direct sum and hence are exact subcategories.
Since $\cP^M(R[A]) = \varinjlim_{F \subset S} \cP^M_F(R[A])$ where $F$
ranges over all finite subsets of $S$ and since 
$K$-theory commutes with filtered colimits, it suffices to prove
$$
\cP^M_F(R[U]) \to \cP^M_F(R[A])
$$
induces an equivalence on $K$-theory for all finite $F \subset
S$. We proceed by induction on $\#F$. If $\#F = 1$, then by
\eqref{E814} and Lemma \ref{EqSplitB},
$\cP^M_F(R[U]) \to \cP^M_F(R[A])$ is an equivalence of categories.

Define a partial order $\leq$ on
$S$ by declaring $s \leq s'$ if and only if $s' - s \in A$. Then 
for projective $R$-modules $P, P'$ and elements $s, s' \in S$, we have
{\tiny
\begin{equation} \label{E42a} 
\Hom_{\cP^M(R[A])}(P \otimes_R R[A][s], P' \otimes_R R[A][s'])
\cong
\begin{cases} 
\Hom_R(P,P') & \text{if $s \leq s'$ and} \\
0 & \text{otherwise.}
\end{cases}
\end{equation}
}
Now assume $\# F > 1$ and 
let $s \in F$ be a maximal element. Define $F' = F
\setminus \{s\}$. We have a commutative diagram of exact functors
$$
\xymatrix{
\cP^M_{F'}(R[U]) \oplus \cP^M_{\{m\}}(R[U]) \ar[d] \ar[r] & 
\cP^M_{F'}(R[A]) \oplus \cP^M_{\{m\}}(R[A]) \ar[d] \\
\cP^M_{F}(R[U]) \ar[r] & 
\cP^M_{F}(R[A])
}
$$
in which the vertical maps are given by direct sum and the horizontal
maps are extensions of scalars. The left-hand vertical map and the top
horizontal map induce equivalences on $K$-theory  using
Lemma \ref{EqSplitA} and induction, respectively. It therefore suffices to
prove that the right-hand vertical map induces an equivalence on $K$-theory.
This follows from Waldhausen's generalization of the Quillen
Additivity Theorem, 
as we now explain.

Let $\cE$ denote the exact category consisting of short exact
sequences of objects of $\cP^M_F(R[A])$ of the form
\begin{equation} \label{E42d}
0 \to B \to P \to C \to 0
\end{equation}
with $B \in \cP^M_{\{m\}}(R[A])$ and $C \in \cP^M_{F'}(R[A])$. 
By Lemma \ref{EqSplitB}, for any such short exact sequence, we have that
$P$ is isomorphic to $B \oplus C$. Moreover, 
by \eqref{E42a} there are no non-trivial maps from $B$ to $C$, and
hence this exact sequence is isomorphic to 
$$
0 \to B \map{\cvec{1}{0}} B \oplus C \map{(0,1)} C \to 0.
$$
Thus $\cE$ is equivalent to the full subcategory consisting of such
``trivial'' exact sequences. A morphism from one such exact sequence
to another is completely determined by the map on middle objects.
That is, the functor
$\cE \to \cP^M_F(R[A])$ sending the exact sequence \eqref{E42d} to $P$
is an equivalence of categories.  On the other hand, 
Waldhausen's Additivity Theorem \cite{Wald} shows 
that the functor
$$
\cE \to \cP^M_{\{m\}}(R[A])  \oplus 
\cP^M_{F'}(R[A]) 
$$
sending \eqref{E42d} to $(B, C)$ induces an equivalence on $K$-theory.
\end{proof}

\section{The Equivariant $K$-theory of Affine Toric Varieties}

In this section we provide an interpretation of Theorem \ref{MainThm1} for
toric varieties.

We adopt the notational
conventions for toric varieties found in Fulton's book \cite{Fulton}. An affine toric variety 
is defined from a
strongly convex rational polyhedral cone $\s$ in $N \otimes_\ints \real$ where
$N \cong \ints^n$ is an $n$ dimensional lattice. Let $M:=\Hom_\ints (N,
\ints) \isom \ints^n$ be the dual lattice and define
the
dual cone of $\s$ by
$$
\s^\dual:=\{ u \in M
\tensor_\ints \real | u(v) \ge 0 \mbox{ for all } v
\in \s \}.
$$
We have
that $\s^{\dual} \intersect M$ is a finitely generated abelian monoid,
by Gordan's Lemma, and
hence, for any commutative ring $R$, the corresponding monoid ring
$R[ \s^{\dual} \intersect M]$ is a finitely generated $R$-algebra.
We let 
$$
U_{\sigma, \Z} = \Spec \Z[\s^\vee \cap M],
$$ 
the {\em affine toric scheme} over $\Z$ associated to $\s$.

Note that for any commutative ring $R$, we have 
$$
U_{\sigma, R} := U_{\sigma, \Z}
\times \Spec R = \Spec R[\s^\vee \cap M].
$$ 
In particular, for a field $k$, the affine $k$-variety
$U_{\sigma,k} 
= \Spec k[\s^\vee \cap M]$ 
is the classical toric $k$-variety associated to $\sigma$.

For any commutative ring $R$, the $R$-algebra $R[\s^\vee \cap M]$ is an $M$-graded
$R$-algebra,  and this grading amounts to an action of the $n$-dimensional
torus scheme $T := \Spec \Z[M]$ on $U_{\s, R}$. Viewing $U_{\s, R}$ as
$U_{\s, \Z} \times \Spec R$, the action of $T$ is given by the
usual action on $U_{\sigma, \Z}$ and the trivial action $\Spec R$.  An
equivariant vector bundle over $U_{\s, R}$ is identified as a
projective module over $R[\s^\vee \cap M]$ that is $M$-graded.  We
therefore obtain
$$
K_*^{M}(R[\s^\vee \cap M]) \cong K_*^T(U_{\s, R}).
$$

Finally, observe that $U(\sigma^\vee \cap M) = \sigma^\perp \cap M$,
and we define $M_\sigma := M/(\sigma^\perp \cap M)$.
The following is 
thus an immediate consequence of 
Theorem
\ref{MainThm1}.

\begin{theorem} \label{Thm2} 
For any commutative ring $R$ and strongly convex rational cone $\s$,
there is a natural isomorphism
$$
K^T_q(U_{\sigma,R}) \cong
K_q(R) \otimes_\Z \Z[M_\sigma].
$$
\end{theorem}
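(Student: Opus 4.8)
The plan is to specialize Theorem~\ref{MainThm1} to the toric setting, using the dictionary assembled earlier in this section. First I would take $M = \Hom_\Z(N,\Z)$ to be the character lattice and set $A := \sigma^\dual \intersect M$. By Gordan's Lemma this is a finitely generated sub-monoid of $M$, so Theorem~\ref{MainThm1} applies directly and supplies an isomorphism
$$
K_q(R) \tensor_\Z \Z[M/U(A)] \isom K_q^M(R[A]).
$$

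Next I would invoke the two identifications recorded just before the statement. On the one hand, an equivariant vector bundle on $U_{\sigma,R}$ is precisely an $M$-graded finitely generated projective $R[\sigma^\dual \intersect M]$-module, so $K_q^M(R[A]) \isom K_q^T(U_{\sigma,R})$. On the other hand, the target group $M/U(A)$ must be matched with $M_\sigma = M/(\sigma^\perp \intersect M)$, which reduces to verifying that $U(\sigma^\dual \intersect M) = \sigma^\perp \intersect M$. This is the one computation of any substance: an element $u \in \sigma^\dual \intersect M$ is a unit of the monoid exactly when $-u$ also lies in $\sigma^\dual \intersect M$, i.e.\ when $u(v) \geq 0$ and $u(v) \leq 0$ for every $v \in \sigma$, i.e.\ when $u(v) = 0$ for all $v \in \sigma$. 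That last condition is precisely $u \in \sigma^\perp$, so $U(A) = \sigma^\perp \intersect M$ and hence $M/U(A) = M_\sigma$.

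Combining these identifications yields the desired isomorphism $K_q^T(U_{\sigma,R}) \isom K_q(R) \tensor_\Z \Z[M_\sigma]$. For the asserted naturality I would lean on the explicit functorial description in Theorem~\ref{MainThm1}, where the isomorphism is induced by the exact functors $(P,s) \mapsto P \tensor_R R[A][s]$; since these functors and each of the identifications above are compatible with base change along $R$, the resulting isomorphism is natural in $R$. I do not expect a genuine obstacle here beyond bookkeeping, which is why the statement is flagged as an immediate consequence; the only step requiring an actual (albeit one-line) argument is the units computation $U(\sigma^\dual \intersect M) = \sigma^\perp \intersect M$.
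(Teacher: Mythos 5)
Your proposal is correct and follows the paper's own route exactly: the paper likewise derives Theorem \ref{Thm2} as an immediate consequence of Theorem \ref{MainThm1}, using the identification of equivariant bundles on $U_{\sigma,R}$ with $M$-graded projective $R[\sigma^\vee\cap M]$-modules and the observation that $U(\sigma^\vee\cap M)=\sigma^\perp\cap M$. The only difference is that you spell out the one-line units computation that the paper leaves to the reader.
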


In particular, we see that Equation \eqref{E1} holds for any affine
toric variety, not only the smooth ones. Observe that $M_\sigma$, as
just defined, coincides with the group of characters on the minimal
orbit of $U_\sigma$.

\begin{remark}
The isomorphism of Theorem \ref{MainThm1} is natural in $R$ in the obvious sense
and 
is natural in $A$ 
in the following sense: If $A \subset
A' \subset M$ is an inclusion of submonoids of $M$,
then
$$
\xymatrix{
K_q(R) \otimes_\Z \Z[M/U(A)] \ar[r]^{\phantom{XXX} \cong} \ar[d]&
K_q^{M}(R[A])  \ar[d] \\
K_q(R) \otimes_\Z \Z[M/U(A')]) \ar[r]^{\phantom{XXX} \cong} &
K_q^{M}(R[A'])  \\
}
$$
commutes, where the left-hand map is the canonical quotient map
and the right-hand map is induced by extension of scalars.

Consequently, the isomorphism of Theorem \ref{Thm2} is natural in $R$
and with respect to 
the inclusion of a  face $\tau$ into $\sigma$. In the
latter case, the map 
$$
K^T_q(U_{\sigma, R}) \to
K^T_q(U_{\tau, R}) 
$$
is induced by pullback along the equivariant open immersion
$U_{\tau, R} \subset U_{\sigma, R}$ and the map 
$$
K_q(R) \otimes_\Z \Z[M_\sigma] \to
K_q(R) \otimes_\Z \Z[M_\tau]
$$
is the map induced by the canonical surjection $M_\sigma \onto M_\tau$.
\end{remark}

\section{The Vezzosi-Vistoli Theorem}

In this section, we use \eqref{E1} from the introduction, the
theory of sheaves on fans and the foundational results of Thomason
\cite{Thomason} concerning equivariant $K$-theory
to recover a result due to Vezzosi and
Vistoli \cite{VV,VV-Erratum}: For a field $k$ and a
smooth toric $k$-variety $X=X(\D)$
defined by a fan $\D$, the sequence
$$
0\lra K^T_q(X) \lra \bigoplus_{\s\in Max(\D)} K^T_q(U_\s) \buildrel
{\partial} \over\lra \bigoplus_{\d,\t \in Max(\D), \d < \t} K^T_q(U_{\d\cap
  \t})
$$
is exact. Here, $Max(\D)$ is the set of maximal cones in $\D$ and we
choose, arbitrarily, a total ordering for this set. 
The map ${\partial}$ is given as
follows.  For $f=(f_{\s})_{\s\in Max(\D)}$ in $\bigoplus_{\s\in
  Max(\D)} K^T_q(U_\s)$, the $(\d < \t)$-component
of its image
is $f_{\t}|_{U_{\d\cap \t}}-f_{\d}|_{U_{\d\cap
  \t}}\in K^T_q(U_{\d\cap \t})$.

In fact, we prove that the sequence
\begin{equation} \label{VVseq}
0\ra K^T_q(X) \ra \bigoplus_{\s} K^T_q(U_\s)\ra
\bigoplus_{\d < \t} K^T_q(U_{\d\cap \t}) \ra
\bigoplus_{\d < \t < \epsilon} K^T_q(U_{\d\cap \t \cap \epsilon}) \ra \cdots
\end{equation}
is exact, where
$\bigoplus_{\s} K^T_q(U_\s)\ra \bigoplus_{\d < \t}
K^T_q(U_{\d\cap \t}) \ra\cdots$ is the \v Cech complex of the presheaf
$K^T_q$ for the equivariant open cover
$\sV = \{U_\s \mid  \text{$\s$ is a maximal cone in
$\D$} \}$. Using Equation \eqref{E1} (or our Theorem \ref{Thm2}), the
exactness of this sequence is equivalent to the existence of an exact
sequence of the form 
{
\begin{equation} \label{VVseq2}
0\ra K^T_q(X) \ra \bigoplus_{\s} K_q(k) \otimes_\Z \Z[M_\s]
\ra
\bigoplus_{\d < \t} K_q(k) \otimes_\Z \Z[M_{\d \cap \t}]
\ra\cdots.
\end{equation}}

We define a topology on the finite
set of cones comprising the fan $\D$ by declaring the open subsets to
be the subfans of $\D$; see \cite{BB} or \cite{BL}. In other words,
we view $\D$ as a poset via face containment, $\prec$, 
and we give $\D$ the ``poset topology'', in which an open
subset $\Lambda$ is a subset satisfying the condition what whenever $x \prec
y$ and $y \in \Lambda$, we have $x \in \Lambda$.
For a cone
$\sigma \in \D$, let $\langle \sigma \rangle$ denote the fan
consisting of $\sigma$ and all its faces (i.e., the smallest
open subset of $\D$ containing $\s$). Observe that for a sheaf $\cF$
on $\D$, we have $\cF(\langle \sigma \rangle) = \cF_\sigma$, the stalk
of $\cF$ at the point $\sigma$.

For this topology, sheaves are uniquely determined by their stalks
and the maps between their stalks arising from comparable elements of
the poset (see \cite[\S 4.1]{Baggio}).  That is, there is an equivalence between the category of
contravariant functors from the poset $\D$ to the category of abelian
groups and the category of sheaves of abelian groups on the
topological space $\D$.  (Recall that a poset may be viewed as a
special type of category.)  Given a sheaf $\cF$ on the space $\D$, the
associated functor on the poset $\D$ sends $\sigma \in \D$ to
$\cF_{\sigma} = \cF(\langle \sigma \rangle)$ and sends a face
inclusion $\tau \prec \sigma$ to the map induced by
$\langle \tau \rangle \subset
\langle \sigma \rangle$.
Given a contravariant functor $F$ on the
poset $\D$, the value of associated sheaf $\cF$ on an open subset
$\Lambda$ of $\D$ is given by
$$
\cF(\Lambda)= \varprojlim_{\sigma \in \Lambda} F(\sigma).
$$


\begin{theorem} \label{ExactSeq}
Assume $X = X(\Delta)$ is a smooth toric variety defined over
  an arbitrary field $k$. Then the presheaf 
$\Lambda \mapsto K_q^T(X(\Lambda))$ defined on $\D$ is a flasque
sheaf. Moreover, there is an isomorphism 
$$
K_q^T(X) \cong K_q(k) \otimes K_0^T(X).
$$
and the sequences \eqref{VVseq} and \eqref{VVseq2} are exact.
\end{theorem}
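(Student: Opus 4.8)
The plan is to identify the presheaf $\Lambda\mapsto K^T_q(X(\Lambda))$ with a sheaf coming from a purely combinatorial functor on the fan, to prove that functor-sheaf is flasque by a direct computation, and then to feed flasqueness into Thomason's Mayer--Vietoris machinery so that every descent sequence splits. First I would record, using Theorem \ref{Thm2} (equivalently \eqref{E1}) together with the Remark, that $\sigma\mapsto K^T_q(U_\sigma)\cong K_q(k)\otimes_\Z\Z[M_\sigma]$ is a contravariant functor on the poset $\D$ whose face maps for $\tau\prec\sigma$ are induced by the surjections $M_\sigma\onto M_\tau$. By the equivalence recalled just before the theorem, this functor determines a sheaf $\cF_q$ on the space $\D$ with $\cF_q(\Lambda)=\varprojlim_{\sigma\in\Lambda}K^T_q(U_\sigma)$; since $K_0(k)=\Z$ and each $\Z[M_\sigma]$ is free, one has $\cF_q\cong K_q(k)\otimes_\Z\cF_0$ on stalks and restriction maps.

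The combinatorial heart is to show $\cF_0$, and hence $\cF_q$, is flasque. For the poset (Alexandrov) topology it suffices to check that adjoining one cone $\sigma$ to an open set, all of whose proper faces are already present, keeps restriction surjective; this reduces to the surjectivity of
$$
\Z[M_\sigma]\longrightarrow \varprojlim_{\tau\prec\sigma,\ \tau\ne\sigma}\Z[M_\tau]
$$
for each cone $\sigma$. Here smoothness of $X$ is essential: $\sigma$ is generated by part of a basis $v_1,\dots,v_r$ of the lattice, so $\Z[M_\sigma]\cong\bigotimes_{i=1}^{r}\Z[t_i^{\pm1}]$ with $r=\dim\sigma$, and restriction to the face spanned by $\{v_i\mid i\in I\}$ is the coordinate projection setting $t_i=1$ for $i\notin I$. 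Writing $\Z[t_i^{\pm1}]=\Z\oplus J_i$ with $J_i$ the augmentation ideal gives $\Z[M_\sigma]=\bigoplus_{S\subseteq\{1,\dots,r\}}\bigotimes_{i\in S}J_i$, and a short check shows the inverse limit over proper faces is exactly the sub-sum over $S\subsetneq\{1,\dots,r\}$; thus the map is the projection killing the single top summand $\bigotimes_{i=1}^r J_i$, and so is surjective. I expect this surjectivity --- equivalently, the vanishing of the obstruction living in the ``top'' tensor factor --- to be the main obstacle, since it is precisely what forces all higher cohomology to vanish.

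With flasqueness in hand I would run the descent. Building $X(\Lambda)$ up by adjoining maximal cones one at a time and invoking Thomason's equivariant Mayer--Vietoris sequence \cite{Thomason} for each cover $X(\Lambda)=X(\Lambda_1)\cup U_\sigma$ (with intersection $X(\Lambda_1\cap\langle\sigma\rangle)$), the surjectivity just proved shows that $K^T_q(U_\sigma)\to K^T_q(X(\Lambda_1\cap\langle\sigma\rangle))$ is already onto, so every connecting map vanishes and each long exact sequence breaks into split short exact sequences. Induction on the number of maximal cones then yields $K^T_q(X(\Lambda))\cong\cF_q(\Lambda)=\varprojlim_{\sigma\in\Lambda}K^T_q(U_\sigma)$, with restriction maps matching those of $\cF_q$ by naturality; hence the presheaf $\Lambda\mapsto K^T_q(X(\Lambda))$ is the flasque sheaf $\cF_q$, which is the first assertion.

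Finally I would extract the remaining two statements formally. The complex appearing after $K^T_q(X)$ in \eqref{VVseq} is the augmented \v{C}ech complex of $\cF_q$ for the cover by maximal cones; since a flasque sheaf has vanishing \v{C}ech cohomology in positive degrees and $\check H^0=\cF_q(\D)=K^T_q(X)$, this complex is exact, which is \eqref{VVseq}, and \eqref{VVseq2} is its translation under Theorem \ref{Thm2}. For the isomorphism $K^T_q(X)\cong K_q(k)\otimes K^T_0(X)$, note that the \v{C}ech complex of $\cF_q$ equals $K_q(k)\otimes_\Z$ (the \v{C}ech complex of $\cF_0$), a complex of finitely generated free abelian groups that is exact in positive degrees with $H^0=K^T_0(X)$; being an exact complex of frees over a principal ideal domain it is split, so $K_q(k)\otimes_\Z-$ preserves its $H^0$, giving $K^T_q(X)=\cF_q(\D)\cong K_q(k)\otimes_\Z\cF_0(\D)=K_q(k)\otimes K^T_0(X)$.
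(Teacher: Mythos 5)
Your proposal is correct and follows the same overall architecture as the paper --- identify $\Lambda \mapsto K_q^T(X(\Lambda))$ with a sheaf on the finite poset $\D$ built from the stalks $K_q(k)\otimes_\Z\Z[M_\sigma]$, prove that sheaf is flasque, and then use Thomason's identification of $\cK^T$ with equivariant $G$-theory to get descent --- but it differs in two substantive ways. First, where the paper simply cites Baggio for the flasqueness of $\cA_0$ and then upgrades to $\cA_q$ by a torsion-freeness argument, you prove flasqueness directly: reducing to the surjectivity of $\Z[M_\sigma]\to\varprojlim_{\tau\prec\sigma,\ \tau\ne\sigma}\Z[M_\tau]$ and computing that limit explicitly via the decomposition $\Z[M_\sigma]=\bigoplus_{S}\bigotimes_{i\in S}J_i$. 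This computation is correct (the limit is precisely the sum over proper subsets $S$, so the map is a split projection), it makes the paper self-contained, and it isolates exactly where smoothness enters; note only that the groups involved are free but not finitely generated (harmless, since your splitting argument for the tensored \v{C}ech complex needs only freeness of the subgroups of cycles), and that the passage ``$\cF_0$ flasque, hence $\cF_q$ flasque'' should be justified either by rerunning your finite direct-sum computation with $K_q(k)\otimes_\Z-$ applied or by the paper's observation that tensoring a flasque sheaf of torsion-free groups preserves the sheaf and flasqueness properties. Second, where the paper invokes the \v{C}ech-to-$K$-theory descent spectral sequence of Thomason--Trobaugh and lets flasqueness collapse it, you run an induction on the number of cones using only the two-set Mayer--Vietoris sequence, with flasqueness killing the connecting maps; this is more elementary and avoids the spectral sequence, at the cost of some naturality bookkeeping to ensure the inductive isomorphisms $K_q^T(X(\Lambda'))\cong\cF_q(\Lambda')$ are compatible with restriction, which you correctly flag. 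Both routes land on the same endgame: the exactness of \eqref{VVseq} and \eqref{VVseq2} as the augmented \v{C}ech complex of a flasque sheaf, and the isomorphism $K_q^T(X)\cong K_q(k)\otimes K_0^T(X)$ from the compatibility of $K_q(k)\otimes_\Z-$ with the (split) exact complex of free abelian groups.
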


\begin{proof} 
Let $\cA_q$ be the sheaf on $\D$ associated to the functor
sending a cone $\s$ to $K_q(k) \otimes \Z[M_\s]$ and a face inclusion 
$\tau \prec \sigma$ to the map induced by the canonical quotient $M_\s
\onto M_\tau$. 

The sheaf $\cA_0$  is flasque by 
  \cite{Baggio}.
Since $\cA_0$ is a flasque sheaf
of torsion free abelian groups, the presheaf $K_q(k) \otimes_\Z \cA_0$
is actually a sheaf. Indeed, for any open subset $U$ and open covering
$U = \cup_i V_i$ of it, the map from $\cA_0(U)$ to the 
associated \v Cech complex is a quasi-isomorphism by 
\cite[III.4.3]{Hartshorne}, and since $\cA_0$ is torsion free, this
map remains a quasi-isomorphism upon tensoring by any
abelian group.
It now follows from the correspondence between functors and sheaves that
$\cA_q \cong K_q(k) \otimes \cA_0$. In particular, $\cA_q$ is also
flasque.

For a subfan $\Lambda$ of $\D$, 
let $\sV$ be the Zariski open covering $\{U_\s \mid \text{$\sigma$ is a
  maximal cone in $\Lambda$} \}$ 
of $X(\Lambda)$ and let
$\sU$ be
the open covering $\{ \langle \s \rangle \mid \s \in Max(\D)\}$
of $\Lambda$.
By Equation \eqref{E1} (or Theorem \ref{Thm2}), the 
\v Cech cohomology complex of the presheaf $\cK_q^T(-)$ on $X(\Lambda)$ for 
the open covering $\cV$ coincides with the \v Cech cohomology complex of the
sheaf $\cA_q$ for the open covering $\sU$.
Since the higher \v Cech cohomology
of flasque sheaves vanishes
\cite[III.4.3]{Hartshorne}, we have
\begin{equation} \label{E415}
\check{H}^p\l(\sV, K_q^T\r) = \check{H}^p\l(\sU, \cA_q\r) = 
0, \, \, \text{for all $p > 0$.}
\end{equation}

Thomason \cite{Thomason} has proven that $\cK^T$ coincides
with equivariant $G$-theory (defined from equivariant coherent
sheaves) and that the latter satisfies the usual
localization property relating $X$, an equivariant closed subscheme,
and its open complement. From this one deduces that if $X(\Lambda) = U \cup
V$ is covering by equivariant open subschemes, then
$$
\xymatrix{
\cK^T(X(\Lambda)) \ar[r] \ar[d] & \cK^T(U) \ar[d] \\
\cK^T(V) \ar[r] & \cK^T(U \cap V) \\
}
$$
is a homotopy cartesian square. Arguing just as in \cite[\S 8]{TT}, one
obtains
a convergent spectral  sequences
$$
\check{H}^p\l(\sV, K_q^T\r) \Lra K_{q-p}^T(X(\Lambda)).
$$
Using \eqref{E415}, this spectral sequence collapses to give
\begin{equation} \label{E415b}
\check{H}^0\l(\sV, K_q^T\r) \isom K_{q}^T(X(\Lambda)), \, \, \text{for all
  $q$.}
\end{equation}

Combining \eqref{E415b} and \eqref{E415}
gives that the complexes 
$$
0 \to K_q^T(X(\Lambda))
\ra \bigoplus_{\s} K^T_q(U_\s)\ra
\bigoplus_{\d < \t} K^T_q(U_{\d\cap \t}) \ra
 \cdots
$$
and
$$
0\ra \cA_q(\Lambda)
\ra \bigoplus_{\s} K_q(k) \otimes_\Z \Z[M_\s]
\ra
\bigoplus_{\d < \t} K_q(k) \otimes_\Z \Z[M_{\d \cap \t}]
\ra\cdots
$$
are exact and isomorphic to each other.
In particular, $\Lambda \mapsto \cK_q^T(X(\Lambda))$ is isomorphic to
the flasque sheaf
$\cA_q$.

The remaining assertions of the Theorem follow immediately.
\end{proof}

\thanks{
{\bf Acknowledgements:}
We are very
  grateful to Dan Grayson for suggesting the use of the 
  Additivity Theorem in the proof of Theorem \ref{MainThm1}. This
  allowed us to simplify our original proof and to make the result
  more general.

We are also indebted to the anonymous referee for many thoughtful
suggestions which greatly improved the exposition of this paper. 
}

\bibliographystyle{plain}


\begin{thebibliography}{10}

\bibitem{Baggio}
Silvano Baggio.
\newblock Equivariant {K}-theory of smooth toric varieties.
\newblock {\em Tohoku Math. J. (2)}, 59(2):203--231, 2007.

\bibitem{BB}
Gottfried Barthel, Jean-Paul Brasselet, Karl-Heinz Fieseler, and Ludger Kaup.
\newblock Equivariant intersection cohomology of toric varieties.
\newblock In {\em Algebraic geometry: Hirzebruch 70 (Warsaw, 1998)}, volume 241
  of {\em Contemp. Math.}, pages 45--68. Amer. Math. Soc., Providence, RI,
  1999.

\bibitem{BL}
Paul Bressler and Valery~A. Lunts.
\newblock Intersection cohomology on nonrational polytopes.
\newblock {\em Compositio Math.}, 135(3):245--278, 2003.

\bibitem{Fulton}
William Fulton.
\newblock {\em Introduction to toric varieties}, volume 131 of {\em Annals of
  Mathematics Studies}.
\newblock Princeton University Press, Princeton, NJ, 1993.
\newblock , The William H. Roever Lectures in Geometry.

\bibitem{Hartshorne}
Robin Hartshorne.
\newblock {\em Algebraic geometry}.
\newblock Springer-Verlag, New York, 1977.
\newblock Graduate Texts in Mathematics, No. 52.

\bibitem{Merk}
Alexander~S. Merkurjev.
\newblock Equivariant {$K$}-theory.
\newblock In {\em Handbook of $K$-theory. Vol. 1, 2}, pages 925--954. Springer,
  Berlin, 2005.

\bibitem{MGR}
Constantin N{\u{a}}st{\u{a}}sescu and Freddy Van~Oystaeyen.
\newblock {\em Methods of graded rings}, volume 1836 of {\em Lecture Notes in
  Mathematics}.
\newblock Springer-Verlag, Berlin, 2004.

\bibitem{Perling-2004}
Markus Perling.
\newblock Graded rings and equivariant sheaves on toric varieties.
\newblock {\em Math. Nachr.}, 263/264:181--197, 2004.

\bibitem{Thomason}
R.~W. Thomason.
\newblock Algebraic {$K$}-theory of group scheme actions.
\newblock In {\em Algebraic topology and algebraic $K$-theory (Princeton, N.J.,
  1983)}, volume 113 of {\em Ann. of Math. Stud.}, pages 539--563. Princeton
  Univ. Press, Princeton, NJ, 1987.

\bibitem{TT}
R.~W. Thomason and Thomas Trobaugh.
\newblock Higher algebraic {$K$}-theory of schemes and of derived categories.
\newblock In {\em The Grothendieck Festschrift, Vol.\ III}, volume~88 of {\em
  Progr. Math.}, pages 247--435. Birkh\"auser Boston, Boston, MA, 1990.

\bibitem{VV}
Gabriele Vezzosi and Angelo Vistoli.
\newblock Higher algebraic {$K$}-theory for actions of diagonalizable groups.
\newblock {\em Invent. Math.}, 153(1):1--44, 2003.

\bibitem{VV-Erratum}
Gabriele Vezzosi and Angelo Vistoli.
\newblock Erratum: ``{H}igher algebraic {$K$}-theory for actions of
  diagonalizable groups'' [{I}nvent. {M}ath. {\bf 153} (2003), no. 1, 1--44;
\newblock {\em Invent. Math.}, 161(1):219--224, 2005.

\bibitem{Wald}
Friedhelm Waldhausen.
\newblock Algebraic {$K$}-theory of spaces.
\newblock In {\em Algebraic and geometric topology (New Brunswick, N.J.,
  1983)}, volume 1126 of {\em Lecture Notes in Math.}, pages 318--419.
  Springer, Berlin, 1985.

\end{thebibliography}

\end{document}